\DeclareSymbolFont{cyrletters}{OT2}{wncyr}{m}{n}
\DeclareMathSymbol{\Sha}{\mathalpha}{cyrletters}{"58}
\font\teneufm=eufm10 \font\seveneufm=eufm7
\font\fiveeufm=eufm5
\let\goth\mathfrak
\def\cA{\mathcal A}
\def\cB{\mathcal B}
\def\cC{\mathcal C}
\def\cF{\mathcal F}
\def\cH{\mathcal H}
\def\cO{\mathcal O}
\def\cE{\mathcal E}
\def\cX{\mathcal X}
\def\GG{\mathbb{G}}
\def\WW{\mathbf{W}}
\def\VV{\mathbf{V}}
\def\gG{\goth G}
\def\gP{\goth P}
\def\gQ{\goth Q}
\def\gH{\goth H}
\def\gM{\goth M}
\def\gT{\goth T}
\def\gX{\goth X}
\def\1{\mbox{\bf 1}}
\def\rad{\mathrm{rad}}
 \DeclareMathOperator{\Hom}{Hom}
\DeclareMathOperator{\Aut}{Aut}
\DeclareMathOperator{\Out}{Out} 
\DeclareMathOperator{\Isom}{Isom}
\DeclareMathOperator{\Isomext}{Isomext}
\DeclareMathOperator{\End}{End} 
\DeclareMathOperator{\GL}{\rm GL}
\newcommand{\incl}[1][r]
{\ar@<-0.2pc>@{^(-}[#1] \ar@<+0.2pc>@{-}[#1]}
\newcommand{\imm}[1][r]
   {\ar@{}[#1] |*[o][F]{\hbox{%
         %\vrule width 1.5mm height 0pt depth 0pt%
         %\vrule width 0pt height .75mm depth .75mm%
         }}
     \ar@{^{(}->}[#1]}
\newtheorem{stheorem}{Theorem}[section]%Theorems et al in italic font.
\newtheorem{scorollary}[stheorem]{Corollary}
\newtheorem{slemma}[stheorem]{Lemma}
\newtheorem{sproposition}[stheorem]{Proposition}
\newtheorem{sremark}[stheorem]{Remark}
\newtheorem{sexamples}[stheorem]{Examples}
\newtheorem{sdefinition}[stheorem]{Definition}
\theoremstyle{definition}%Theorems et al in roman font.
\numberwithin{equation}{section}
\def\ZZ{\mathbb{Z}}
\def\gG{\mathfrak{G}}
\def\gP{\mathfrak{P}}
\def\gQ{\mathfrak{Q}}
\def\gS{\mathfrak{S}}
\def\cO{\mathcal{O}}
\def\2int{\mathop{2\int}\nolimits}
\def\End{\mathop{\rm  End}\nolimits}
\def\rank{\mathop{\rm rank}\nolimits}
\def\Spec{\mathop{\rm Spec}\nolimits}
\def\Hom{\mathop{\rm Hom}\nolimits}
\def\Mat{\mathop{\rm M}\nolimits}
\def\Aut{\text{\rm{Aut}}}
\def\Out{\text{\rm{Out}}}
\def\Isom{\mathop{\rm Isom}\nolimits}
\def\Isomext{\mathop{\rm Isomext}\nolimits}
\def\resp.{\mathop{\rm resp.}\nolimits}
\def\lgr{\longrightarrow}
\font\math=cmmi10
\def\varpi{\hbox{\math\char'44}}
\def\simlgr{\buildrel\sim\over\lgr}
\def\pa{\S\kern.15em }
\def\un{\uppercase\expandafter{\romannumeral 1}}
\def\deux{\uppercase\expandafter{\romannumeral 2}}
\def\trois{\uppercase\expandafter{\romannumeral 3}}
\def\quatre{\uppercase\expandafter{\romannumeral 4}}
\def\cinq{\uppercase\expandafter{\romannumeral 5}}
\def\six{\uppercase\expandafter{\romannumeral 6}}
\def\hfl#1#2#3{\smash{\mathop{\hbox to#3{\rightarrowfill}}\limits
^{\scriptstyle#1}_{\scriptstyle#2}}}
\def\gfl#1#2#3{\smash{\mathop{\hbox to#3{\leftarrowfill}}\limits
^{\scriptstyle#1}_{\scriptstyle#2}}}
\title[Reductive group schemes]{When is a reductive group scheme linear?}
\date{\today}
\author{Philippe Gille}
\address[]{P. Gille, Institut Camille Jordan - Universit\'e Claude Bernard Lyon 1
43 boulevard du 11 novembre 1918,
69622 Villeurbanne cedex - France }
\email{gille@math.univ-lyon1.fr}
\thanks{The author is  supported  by the project ANR Geolie, ANR-15-CE
40-0012, (The French National Research Agency).}
\begin{document}

 \begin{abstract}  We show that a reductive group scheme over a base scheme $S$
 admits a faithful linear representation if and only if its radical torus
 is isotrivial; that is, it splits  after a finite \'etale cover.
  
\end{abstract}

\maketitle

\vskip-4mm

\hfill \hfill {\it to the $75$-th anniversary of Gopal Prasad} \hfill

\bigskip

\bigskip

\noindent {\em Keywords:} Reductive group schemes, representations, tori, resolution property.  \\

\vskip-4mm

\noindent {\em MSC 2000: 14L15, 20G35}

\bigskip

\bigskip

\section{Introduction}

Let $S$ be a scheme. Let $\gG$ be an  
 $S$--group scheme. It is natural to ask 
 whether $\gG$ is linear; that is,  there exists a group monomorphism 
 $\gG \to \GL(\cE)$ where 
 $\cE$ is a locally free $\cO_S$-module of finite rank. In particular, $\gG$ admits a faithful representation on $\cE$.
This holds for affine algebraic groups over a field 
\cite[II, \S 2.3.3]{DG}.

In the case $S$ is locally noetherian and $G$ is of multiplicative type
of constant type and of finite type, Grothendieck has shown that 
$G$ is linear if and if $G$ is isotrivial, i.e. $G$ is split by a finite \'etale extension of $S$ \cite[XI.4.6]{SGA3}. In particular there exist non linear tori of rank $2$ over the local ring (at a node) of a nodal 
algebraic  curve  ({\it ibid}, X.1.6).
Firstly we extend that criterion over an arbitrary base  by using
Azumaya and separable $\cO_S$--algebras (Theorem \ref{thm_torus}).

Secondly we deal with the case $\gG$ reductive; that 
is, $\gG$ is smooth affine with reductive (connected) geometric fibers.
In this case a faithful representation is necessarily a closed immersion \cite[XVI.1.5]{SGA3}.
Positive results on the linearity  question are due to 
M. Raynaud  \cite[VI$_B$]{SGA3} and  R. Thomason \cite[3.1]{T}
which is essentially the implication $(i) \Longrightarrow (ii)$ 
in the theorem below.

%We shall see that the existence of a faithful linear representation
%does not hold in general even for rank two tori.
We can restrict our attention to the case 
when $\gG$ is of constant type (recall that the type is a locally
constant function on $S$);
this implies that  there exists a Chevalley $\ZZ$--group scheme $G$
such that $\gG$ is locally isomorphic to $G_S$ for the \'etale topology 
\cite[XXII.2.3, 2.5]{SGA3}. 
A short version of our main result is the following.

\begin{stheorem}\label{thm_main}
The following are equivalent:

\smallskip
(i) The radical torus $\rad(\gG)$  is isotrivial;

\smallskip

(ii) $\gG$ is linear.

\smallskip

\noindent Furthermore if $S$ is affine,  the above are  equivalent to 

\smallskip

(ii') there  exists  a
closed immersion  $i: \gG  \to \GL_n$ with $n \geq 1$
which is a homomorphism.

\end{stheorem}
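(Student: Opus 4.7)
The three implications can be handled separately, using Theorem~\ref{thm_torus} as the main input. The implication $(ii) \Rightarrow (i)$ is an immediate consequence: given a monomorphism $i: \gG \hookrightarrow \GL(\cE)$, restricting $i$ to the closed subgroup $\rad(\gG) \subset \gG$ produces a monomorphism $\rad(\gG) \hookrightarrow \GL(\cE)$, exhibiting the radical torus as linear. Since $\rad(\gG)$ is a torus of finite type and of constant type (because $\gG$ is of constant type), Theorem~\ref{thm_torus} immediately yields that $\rad(\gG)$ is isotrivial.

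The hard direction $(i) \Rightarrow (ii)$ is essentially Thomason's theorem \cite[3.1]{T}. The plan is to construct a faithful representation of $\gG$ as a direct sum $\Ad \oplus \rho$, where $\Ad : \gG \to \GL(\Lie(\gG))$ is the adjoint representation (with scheme-theoretic kernel $Z(\gG)$) and $\rho$ is a $\gG$-representation whose restriction to $Z(\gG)$ is faithful. To produce $\rho$, the first task is to promote isotriviality of $\rad(\gG)$ to isotriviality of $Z(\gG)$: since $Z(\gG)$ is of multiplicative type and of constant type, with identity component $\rad(\gG)$ and finite quotient $Z(\gG)/\rad(\gG)$ (itself a finite multiplicative type group scheme, automatically isotrivial since its Cartier dual is finite locally constant), one takes a finite \'etale cover splitting both pieces; the extension class of the resulting character sheaves lies in a finitely generated subgroup of $H^1$ which can be killed by a further finite \'etale refinement, splitting $Z(\gG)$. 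Theorem~\ref{thm_torus} then yields a faithful linear representation of $Z(\gG)$.

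Extending this representation to a $\gG$-representation $\rho$ is the main obstacle, and I would follow Thomason's strategy. On a finite \'etale cover $S' \to S$ splitting $\gG$, choose a split maximal torus $\bT$ with Borel subgroup $\bB$ in $\gG_{S'}$. For sufficiently dominant characters $\lambda \in X^*(\bT)$, take global sections of the $\gG_{S'}$-linearized line bundle $\cL(\lambda)$ on $\gG_{S'}/\bB$ to obtain a $\gG_{S'}$-representation whose $\bT$-weights include $\lambda$. Varying $\lambda$ over a set separating the characters of $Z(\gG_{S'}) \subset \bT$ yields a $\gG_{S'}$-representation faithful on the center. A descent argument along the finite \'etale cover $S' \to S$ (the step where isotriviality of $Z(\gG)$ is essential, as one must form Galois-invariant combinations of these sections) then produces the required representation $\rho$ of $\gG$ over $S$.

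Finally, the equivalence $(ii) \Leftrightarrow (ii')$ under $S$ affine is a reduction. The direction $(ii') \Rightarrow (ii)$ is trivial with $\cE = \cO_S^n$. Conversely, when $S = \Spec(A)$ is affine, any locally free $\cO_S$-module $\cE$ of finite rank corresponds to a finitely generated projective $A$-module and is therefore a direct summand of some $\cO_S^n$; extension by the identity on a complement exhibits $\GL(\cE)$ as a closed subgroup scheme of $\GL_n$. Composing with $\gG \hookrightarrow \GL(\cE)$, which is automatically a closed immersion by \cite[XVI.1.5]{SGA3}, produces the desired closed immersion and group homomorphism $\gG \hookrightarrow \GL_n$. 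The substantive difficulty of the whole proof is concentrated in the descent of the Borel--Weil type construction, which is why Thomason's argument is the essential input in the direction $(i)\Rightarrow(ii)$.
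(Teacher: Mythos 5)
Your directions $(ii)\Rightarrow(i)$ and $(ii)\Leftrightarrow(ii')$ are correct and coincide with the paper's (restriction to $\rad(\gG)$ followed by Theorem \ref{thm_torus}, and the direct-summand argument of Lemma \ref{lem_affine}). The gap is in $(i)\Rightarrow(ii)$: your construction starts by passing to ``a finite \'etale cover $S'\to S$ splitting $\gG$'' and running a Borel--Weil argument on $\gG_{S'}/\bB$ with a split maximal torus. But isotriviality of $\rad(\gG)$ does \emph{not} imply isotriviality of $\gG$, so no such finite \'etale splitting cover exists in general; the two conditions agree only under extra hypotheses (e.g.\ over a semilocal base, which is exactly Corollary \ref{cor_semilocal}), and their divergence is the reason the theorem is stated in terms of $\rad(\gG)$ rather than $\gG$. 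The most one can extract from (i) is that after a finite \'etale cover $\gG$ becomes an \emph{inner} form ${}^{\gQ}G$ for a $G_{ad}$--torsor $\gQ$ that need not be trivialized by any further finite \'etale cover. An fppf splitting cover does not help, because the descent step you invoke (in substance the Weil-restriction argument of Lemma \ref{lem_weil}) needs the cover to be finite locally free for the pushforward of a vector bundle to stay a vector bundle. So the sentence ``a descent argument along the finite \'etale cover $S'\to S$ \dots\ then produces the required representation $\rho$'' cannot be executed as written; your preliminary observation that $Z(\gG)$ is isotrivial is fine, but it does not produce a $\gG$--equivariant extension of a faithful representation of $Z(\gG)$.

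For contrast, the paper avoids splitting $\gG$ altogether. It first shows (Theorem \ref{thm_main_complete}, $(i)\Rightarrow(ii)$ there) that isotriviality of $\rad(\gG)$ forces the $\Out(G)_S$--torsor $\Isomext(G_S,\gG)$ to be isotrivial, via the exact sequence $1\to G_{ad}\to H\to F\to 1$ with $F$ finite \'etale from \cite[XXIV.2.16]{SGA3}; after the resulting finite \'etale cover (harmless by Lemma \ref{lem_weil}), $\gG\cong{}^{\gQ}G$ is inner. It then linearizes inner forms without trivializing $\gQ$: a faithful representation $G\rtimes G_{ad}\to\GL_n$ over $\ZZ$ (Bruhat--Tits) is $G_{ad}$--equivariant by construction, so it can be twisted by $\gQ$ to give ${}^{\gQ}G\hookrightarrow\GL(\cE)$. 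If you want to retain your adjoint-plus-center decomposition, you would still have to build the center-faithful summand for a non-isotrivial inner form, which is essentially as hard as this equivariant-twisting step.
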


We recall that  $\rad(\gG)$ is the maximal central subtorus of $\gG$ \cite[XXIV.4.3.6]{SGA3} and that (i)
means that $\rad(\gG)$ splits 
after passing to a finite \'etale cover $S'\to S$.
In the noetherian setting,  a variant of the implication $(i) \Longrightarrow (ii)$
has been shown by Margaux who furthermore  provided  an $\Aut(\gG)$-equivariant representation \cite{M}.
Note that condition (i) depends only on the quasi-split form of 
$\gG$ and also that it is always satisfied
 if $\gG$ is semisimple or if $\rad(\gG)$ is of rank one.
Furthermore, if  $S$ is a semilocal scheme, 
Demazure's characterization of isotrivial group schemes \cite[XXIV.3.5]{SGA3} 
permits us to deduce that 
 $\gG$ is isotrivial if and only if $\gG$ is linear, 
see section \ref{section_semilocal}.

Finally for $S=\Spec(R)$ with $R$ noetherian, we complete Thomason's approach
by showing that linearity for $\gG$ is equivalent to the 
resolution property (Th. \ref{thm_re}).

\medskip

\smallskip

\noindent{\bf Acknowledgements.} I thank Vladimir Chernousov, Laurent Moret-Bailly, Erhard Neher, Arturo Pianzola,
and Anastasia Stavrova for their valuable suggestions.
I thank the referee for a simplification of the proof of Proposition \ref{prop_torus}.

\section{Definitions and basic facts}

\subsection{Notation}
We use mainly  the terminology and notation of Grothendieck-Dieudonn\'e \cite[\S 9.4  and 9.6]{EGA1}
 which agrees with that  of Demazure-Grothendieck used in \cite[Exp. I.4]{SGA3}.
 
 \smallskip

\noindent (a) Let $S$ be a scheme and let $\cE$ be a quasi-coherent sheaf over $S$.
 For each morphism  $f:T \to S$, 
we denote by $\cE_{T}=f^*(\cE)$ the inverse image of $\cE$ 
by the morphism $f$.
 We denote by $\VV(\cE)$ the affine $S$--scheme defined by 
$\VV(\cE)=\Spec\bigl( \mathrm{Sym}^\bullet(\cE)\bigr)$;
it represents the $S$--functor $Y \mapsto \Hom_{\cO_Y}(\cE_{Y}, \cO_Y)$ 
\cite[9.4.9]{EGA1}. 
%This construction generalizes to algebraic spaces. 

\smallskip

\noindent (b) We assume now that $\cE$ is locally free of finite rank and denote by $\cE^\vee$ its dual.
In this case the affine $S$--scheme $\VV(\cE)$ is  of finite presentation 
(ibid, 9.4.11); also
the $S$--functor $Y \mapsto H^0(Y, \cE_{Y})= 
\Hom_{\cO_Y}(\cO_Y, \cE_{Y} )$ 
is representable by the  affine $S$--scheme $\VV(\cE^\vee)$
which is also denoted by  $\WW(\cE)$  \cite[I.4.6]{SGA3}.
 %Once again this construction generalizes to algebraic spaces.
 
The above applies to the locally free quasi-coherent sheaf
${\cE}nd(\cE) = \cE^\vee \otimes_{\cO_S} \cE$ 
 over $S$ so that we can consider
the affine $S$--scheme $\VV\bigl({\cE}nd(\cE)\bigr)$
which is an $S$--functor in associative  and unital algebras
\cite[9.6.2]{EGA1}.
Now we consider the $S$--functor $Y \mapsto \Aut_{\cO_Y}(\cE_{Y})$.
It is representable by an open $S$--subscheme of $\VV\bigl({\cE}nd(\cE)\bigr)$
which is denoted by $\GL(\cE)$ ({\it loc. cit.}, 9.6.4).

\smallskip

\noindent (c)  If $\cB$ is a locally free $\cO_S$--algebra (unital, associative) of finite rank,
we recall that the functor of invertible elements  of $\cB$ is representable
by an affine $S$-group scheme which is denoted by $\GL_1(\cB)$ \cite[2.4.2.1]{CF}.

For separable and Azumaya algebras, we refer to \cite{KO}.
Note that in \cite[\S 2.5.1]{CF}, separable algebras are supposed furthermore to be locally free of finite rank.

If  $\cB$  is a separable $\cO_S$--algebra which is a
locally free $\cO_S$--algebra of finite rank,
then  $\GL_1(\cB)$ it is a reductive $S$--group scheme  \cite[3.1.0.50]{CF}.

\smallskip

\noindent (d) We use the theory and terminology of tori and multiplicative group schemes of \cite{SGA3}; see also Oesterl\'e's survey \cite{O}.

\subsection{Finite \'etale covers}

The next lemma  is a consequence of the equivalence of categories describing 
finite \'etale $\cO_S$--algebra of rank $N$
\cite[\S 2.5.2]{CF}; it admits a simple direct proof.

\begin{slemma} \label{lem_brian} Let $N$ be a positive integer and let
 $\cC$ be a finite \'etale $\cO_S$--algebra of rank $N$. Then 
 there exists a finite \'etale cover  $T$ of $S$
 of degree  $N!$ such that 
 $\cC \otimes_{\cO_S} {\cO_T} \simlgr (\cO_T)^N $. 
\end{slemma}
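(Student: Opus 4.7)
The plan is to construct $T$ as a scheme parametrizing orderings of the $N$ points of the finite étale cover $X=\Spec(\cC) \to S$, and then observe that over such a $T$ the cover $X$ automatically decomposes into $N$ sections.

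First I would set $X=\Spec(\cC)$, so $X \to S$ is a finite étale morphism of constant degree $N$, and form the $N$-fold fiber product
\[
X^{(N)} = X \times_S X \times_S \cdots \times_S X
\]
(with $N$ factors), which is again finite étale over $S$, of degree $N^N$. The key input is that, since $X \to S$ is étale, each partial diagonal $\Delta_{ij} \subset X \times_S X$ (and hence its preimage in $X^{(N)}$ under the projection to the $(i,j)$-factors) is both open and closed. Let $T$ be the complement in $X^{(N)}$ of the union of these $\binom{N}{2}$ partial diagonals; then $T$ is an open and closed subscheme of $X^{(N)}$, hence itself finite étale over $S$.

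Next I would check that $T$ has degree $N!$ by computing the geometric fibers: if $\bar s \to S$ is a geometric point, then $X_{\bar s}$ consists of exactly $N$ points, and the fiber $T_{\bar s}$ is the set of $N$-tuples of pairwise distinct points of $X_{\bar s}$, that is, the set of bijections $\{1,\dots,N\} \simlgr X_{\bar s}$, which has $N!$ elements. Thus $T \to S$ is finite étale of degree $N!$.

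Finally I would exhibit the splitting. On $T$, the $N$ projection morphisms $\pi_i: T \to X$ ($i=1,\dots,N$) give, after base change to $T$, $N$ tautological sections $s_i = (\pi_i,\id_T) : T \to X_T = X \times_S T$, which are pairwise disjoint by the very definition of $T$. Each $s_i$ is a closed immersion (sections of a separated morphism are closed immersions) and also an open immersion (sections of an étale morphism are open), so $\coprod_{i=1}^N s_i(T)$ is an open and closed subscheme of $X_T$ of the same degree $N$ as $X_T \to T$, hence equals $X_T$. Translating this disjoint union decomposition of $X_T$ back to algebras gives the required isomorphism $\cC \otimes_{\cO_S} \cO_T \simlgr (\cO_T)^N$. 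The main thing to be careful about is the open/closed character of the partial diagonals in the étale setting, which is what makes the construction of $T$ work uniformly over the base.
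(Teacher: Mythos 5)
Your proof is correct, but it is organized differently from the paper's. The paper argues by induction on $N$: it bases change to $S'=\Spec(\cC)$ itself, uses the fact that the diagonal $S'\to S'\times_S S'$ is open and closed (because $S'\to S$ is \'etale and separated) to split off one factor $\cO_{S'}$ from $\cC\otimes_{\cO_S}\cO_{S'}$, and then applies the inductive hypothesis to the remaining rank-$(N-1)$ \'etale algebra; the degree $N!$ arises as $N\times (N-1)!$ along the resulting tower. You instead build $T$ in one step as the complement of the partial diagonals in the $N$-fold fiber product $X^{(N)}$, i.e.\ as the scheme of orderings (equivalently, the $S_N$-torsor of trivializations) of $X=\Spec(\cC)$, and then use the counting-of-degrees argument to see that the $N$ tautological disjoint sections exhaust $X_T$. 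The key input is the same in both arguments --- the open-and-closed character of the diagonal of a finite \'etale morphism --- and in fact unwinding the paper's induction produces essentially your $T$. What your version buys is a more structural description of $T$ (it is visibly functorial and carries a natural $S_N$-action, which makes the degree computation on geometric fibers transparent); what the paper's version buys is brevity, since the induction avoids having to discuss disjointness of the $\binom{N}{2}$ partial diagonals and the final exhaustion argument. All the steps you flag (sections of a separated unramified morphism being open and closed immersions, constancy of the fiber degree) are standard and correctly invoked, so there is no gap.
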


\begin{proof}
We proceed by induction on $N$, the case $N=1$ being obvious.
We put $S'= \Spec(\cO_C)$, 
 this is a finite \'etale cover of $S$ of degree $N$. 
Since the diagonal map $S' \to S' \times_S S'$ is closed and
open \cite[$_4$.17.4.2]{EGA4}  
 there exists a decomposition  $\cC \otimes_{\cO_S} \cO_S'=\cO_{S'} \times  \cC'$ where $\cC'$ is a finite \'etale $\cO_{S'}$--algebra of rank $N-1$.
 Applying the induction process to $\cC'$ provides 
 a finite \'etale cover  $T$ of $S'$
 of degree  $(N-1)!$ such that 
 $\cC' \otimes_{\cO_{S'}} {\cO_T} \cong (\cO_T)^{N-1}$.
 Thus $\cC \otimes_{\cO_S} \cO_T=\cO_{T} \times  (\cO_T)^{N-1}$
 and $T$ is a finite \'etale cover of $S$ of degree $N!= N \times (N-1)!$.
\end{proof}

\subsection{Isotriviality} \cite[XXIV.4]{SGA3}
Let $\cH$ be a fppf $S$--sheaf in groups and let 
$\cX$ be a $\cH$--torsor. We say that  $\cX$ is {\it isotrivial} 
if there exists a finite \'etale cover $S'$ of $S$
which trivializes $\cX$; that is, satisfying $\cX(S') \not = \emptyset$.

The notion of locally isotrivial (with respect to the Zariski topology) is then clear and
there is also the following variant  of  {\it semilocally isotrivial}.

We say that $\cX$ is {\it  semilocally isotrivial}
if for each subset $\{s_1,\dots, s_n\}$ of points of $S$
contained in an affine open subset of $S$, there exists
an open subscheme $U$ of  $S$ containing $s_1, \dots, s_n$
such that $\cX \times_S U$ is isotrivial over $U$.

\medskip

A reductive $S$--group scheme $\gG$ is {\it isotrivial} if it is split by
a finite \'etale cover $S'$ of $S$. 
An isotrivial  reductive $S$--group scheme $\gG$ is necessarily of constant type.
If $\gG$ is of constant type with underlying Chevalley group scheme $G$,
$\gG$ is isotrivial if and only if
the $\Aut(G)$--torsor $\mathrm{Isom}( G_S, \gG)$ is isotrivial.

\subsection{Rank one tori}\label{subsec-rank1}
 The simplest case is that of $G=\GG_{m,S}$, the split 
$S$--torus of rank $1$. 
The $S$--functor $S' \mapsto  \Hom_{S'-gp}( \GG_{m,S'}, \GG_{m,S'})$
is representable by the constant $S$--group scheme $\ZZ_S$
\cite[VIII.1.5]{SGA3}. It follows that 
the $S$--functor $S' \mapsto  \Isom_{S'-gp}( \GG_{m,S'}, \GG_{m,S'})$
is representable by the constant $S$--group scheme $(\ZZ/2\ZZ)_S
=\Aut_{S-gp}(\ZZ_S)$.
On the other hand, $(\ZZ/2\ZZ)_S$ is the automorphism group 
of the split  \'etale cover $S \sqcup S \to S$ of degree $2$.
By definition an $S$--torus of rank one is a form 
of  $\GG_m$ for the  fpqc topology;
in the other hand, a degree $2$ \'etale cover 
is a   form  of $S \sqcup S$ for the finite \'etale topology
(see for example Lemma \ref{lem_brian}) and a fortiori 
for the  fpqc topology.

According to the faithfully flat descent technique (e.g. \cite[XXIV.1.17]{SGA3}), 
 there is then an equivalence of categories between
the groupoid of rank one tori over $S$ (resp.\ the groupoid of degree $2$ \'etale covers of $S$) and
the groupoid of $(\ZZ/2\ZZ)_S$-torsors.

More precisely one  associates to an $S$-torus $T$  of rank $1$
(resp.\ to an \'etale cover $E$ of degree $2$)  the 
$(\ZZ/2\ZZ)_S$-torsor $\mathrm{Isom}_{gr}( \GG_m, T)$
(resp. $\mathrm{Isom}( S \sqcup S , E)$). 
The inverse map is given by twisting the split object 
by a given $(\ZZ/2\ZZ)_S$-torsor.

Let $T$ be an $S$--torus of rank one and let $S'\to T$ be its 
associated  \'etale cover of degree $2$.
Then $T$ is splits after base change to $S'$ and so is 
isotrivial.  Furthermore  we claim that  $T$ is isomorphic to the quotient 
$Q=R_{S'/S}(\GG_{m,S'})/ \GG_m$ where $R_{S'/S}(\GG_{m,S'})$
stands for the Weil restriction.
We consider the  $(\ZZ/2\ZZ)_S$--equivariant exact sequences
$$
1 \to \GG_{m,S} \xrightarrow{\Delta} \GG_{m,S} \times \GG_{m,S}
\xrightarrow{\Pi} \GG_{m,S} \to 1
$$
where $\Pi(x,y)=x \, y^{-1}$ and the $\ZZ/2\ZZ$-action on $\GG_{m,S} \times \GG_{m,S}$ is by $(x,y) \mapsto (y,x)$.
The $\ZZ/2\ZZ$-action on the last factor is then by $x \mapsto  x^{-1}$.
Twisting this sequence by the $(\ZZ/2\ZZ)_S$-torsor $\mathrm{Isom}( S \sqcup S , S')$ yields the desired exact sequence
$$
1 \to \GG_m \to R_{S'/S}(\GG_{m,S'}) \to Q \to 1 
$$
where the identification of the second term is left to the reader
(it  is similar to that of the proof of \cite[XXIX.3.13]{SGA3}).

\medskip

\subsection{Linear representations}

Let $\gG$ be an $S$--group scheme.  We say that  $\gG$ is {\it linear} if there exists
a locally free $\cO_S$-module $\cE$ is of finite rank
and a group homomorphism $\gG \to \GL(\cE)$ which is a monomorphism.

The notion of locally linear $S$--group scheme  is then clear and
there is also the following variant  of  {\it semilocally linear}.

We say that $\gG$ is {\it  semilocally linear}
if for each subset $\{ s_1,\dots, s_n\}$ of points of $S$
contained in an affine open subset of $S$, there exists
an open subscheme $U$ of  $S$ containing $s_1, \dots, s_n$
such that $\gG  \times_S U$ is linear  over $U$.

\begin{slemma}\label{lem_affine}
Assume that $S=\Spec(R)$ and let $\cE$ be  a locally free  $\cO_S$--module  of finite rank. 
Then $\GL(\cE)$ embeds as a closed $S$--subgroup scheme in $\GL_n$ for some $n \geq 1$.
\end{slemma}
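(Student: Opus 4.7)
The strategy is standard: write $\cE$ as a direct summand of a free $\cO_S$-module of finite rank and extend automorphisms of $\cE$ by the identity on the complement.

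Since $S = \Spec(R)$ is affine, $\cE$ corresponds to a finitely generated projective $R$-module $E$. As projective modules are direct summands of finite free modules, I choose an $R$-module $E'$ and an integer $n \geq 1$ such that $E \oplus E' \cong R^n$, and let $p \in M_n(R)$ denote the idempotent projecting onto $E$ along $E'$. I define a morphism of $S$--group schemes $\iota : \GL(\cE) \to \GL_n$ functorially by
$$\iota_T : \Aut_{\cO_T}(\cE_T) \to \Aut_{\cO_T}(\cO_T^n), \qquad g \longmapsto g \oplus \id_{\cE'_T},$$
using the induced decomposition $\cO_T^n \cong \cE_T \oplus \cE'_T$ obtained from base change; this is a group homomorphism by construction.

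To show $\iota$ is a closed immersion, I realize its image as the closed subgroup scheme $H \subset \GL_n$ cut out by the two equations $hp = ph$ and $h(1-p) = 1-p$, both visibly closed conditions inside the affine scheme $\VV\bigl({\cE}nd(\cO_S^n)\bigr)$ of $n \times n$ matrices. A $T$-point $h \in H(T)$ commutes with $p_T$, and hence preserves both $\cE_T$ and $\cE'_T$; since the second equation forces it further to restrict to the identity on $\cE'_T$, it must be of the form $g \oplus \id_{\cE'_T}$ for a unique $g \in \Aut_{\cO_T}(\cE_T)$. The rule $h \mapsto h|_{\cE_T}$ then provides the functorial inverse to $\iota$, identifying $\GL(\cE)$ with the closed subscheme $H$ of $\GL_n$.

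The main obstacle is very mild: one only has to check that the matrix equations defining $H$ and the inverse assignment $h \mapsto h|_{\cE_T}$ are both compatible with arbitrary base change $T \to S$, which is immediate. The key substantive input is the choice of the complement $E'$, which relies on $S$ being affine (so that $E$ is a projective $R$-module) and therefore a direct summand of some $R^n$; without affineness one could not, in general, reduce to a free ambient module in this way.
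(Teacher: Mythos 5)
Your proof follows the same route as the paper's: since $S$ is affine, $E=H^0(S,\cE)$ is finitely generated projective, hence a direct summand of some $R^n$, and one extends automorphisms of $\cE$ by the identity on the complement. The only difference is that you spell out why the resulting homomorphism $\GL(\cE)\to\GL_n$ is a closed immersion (cutting out the image by the equations $hp=ph$ and $h(1-p)=1-p$), a point the paper asserts without detail; your verification is correct.
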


 \begin{proof} Since the rank of $\cE$ is a locally constant function \cite[Ch. \, 0, 5.4.1]{EGA1},
we can assume that  $\cE$ is locally free of constant of rank $r$.
Then $E=H^0(R, \cE)$ is a locally free $R$--module of rank $r$,
so is finitely generated projective \cite[Tag 00NX]{St}. It follows that there exists an integer
$n \geq 1$ and a decomposition $R^n =E \oplus E'$.
The homomorphism $\GL(\cE) \to \GL_n$ is a closed immersion. 
\end{proof}

\begin{slemma}\label{lem_weil}
Let $\gG$ be an $S$--group scheme
and let $S'$ be a finite locally free cover of $S$.
Then $\gG$ is linear if and only if $\gG \times_S S'$
is linear.
\end{slemma}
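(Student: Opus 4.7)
The forward direction is immediate: monomorphisms are stable under arbitrary base change, so a faithful representation $\gG\hookrightarrow \GL(\cE)$ with $\cE$ a locally free $\cO_S$-module of finite rank pulls back to a faithful representation $\gG_{S'}\hookrightarrow \GL(\cE_{S'})$, and $\cE_{S'}$ is still locally free of finite rank.

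For the reverse direction, suppose we have a monomorphism $\gG_{S'}\hookrightarrow \GL(\cE')$ with $\cE'$ a locally free $\cO_{S'}$-module of finite rank, and let $f\colon S'\to S$ denote the given finite locally free morphism. The plan is to push the representation down via the Weil restriction $R_{S'/S}$. Put $\cE := f_*\cE'$; since $f$ is finite locally free and $\cE'$ is locally free of finite rank over $\cO_{S'}$, $\cE$ is locally free of finite rank over $\cO_S$.

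I would then assemble a monomorphism $\gG\hookrightarrow \GL(\cE)$ as the composition of three maps. First, the unit of the adjunction $\gG\to R_{S'/S}(\gG_{S'})$ is a monomorphism: for any $S$-scheme $T$, the map $\gG(T)\to \gG(T\times_S S')$ is injective because $T\times_S S'\to T$ is faithfully flat. Second, applying $R_{S'/S}$ to the given monomorphism yields a monomorphism $R_{S'/S}(\gG_{S'})\hookrightarrow R_{S'/S}(\GL(\cE'))$, since Weil restriction is a right adjoint and therefore preserves monomorphisms. Third, I would identify $R_{S'/S}(\GL(\cE'))$ with a closed $S$-subgroup scheme of $\GL(\cE)$: on $T$-points the former is $\Aut_{\cO_{T'}}(\cE'\otimes_{\cO_{S'}}\cO_{T'})$ while the latter is $\Aut_{\cO_T}(\cE\otimes_{\cO_S}\cO_T)$, and restriction of scalars gives a natural inclusion (the condition of being $\cO_{S'}$-linear rather than merely $\cO_S$-linear is a closed condition).

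The only point requiring a bit of care is the third step, the identification $R_{S'/S}(\GL(\cE'))\hookrightarrow \GL(f_*\cE')$. Its content is the compatibility $f_*(\cE'\otimes_{\cO_{S'}}\cO_{T'})\cong f_*(\cE')\otimes_{\cO_S}\cO_T$, which holds because $f$ is finite locally free (so $f_*$ commutes with arbitrary base change on $S$ for quasi-coherent modules). Once this is in hand, the composition of the three monomorphisms above exhibits $\gG$ as linear over $S$, completing the proof.
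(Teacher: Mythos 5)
Your proof is correct and follows essentially the same route as the paper: push forward $\cE'$ to $\cE=f_*\cE'$ and factor the desired monomorphism as $\gG\to R_{S'/S}(\gG_{S'})\to R_{S'/S}(\GL(\cE'))\to\GL(\cE)$, with each map a monomorphism. You supply slightly more justification than the paper does (fppf descent for the unit map, the right-adjoint argument for preservation of monomorphisms, and the base-change compatibility of $f_*$), but the argument is the same.
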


\begin{proof} We denote by $p: S' \to S$ the structure map.
If $\gG$ is linear, then $\gG \times_S S'$ is linear.
Conversely we assume that there exists a monomorphism
$i: \gG \times_S S' \to \GL(\cE')$ where 
 $\cE'$ is  a locally free  $\cO_{S'}$--module  of finite rank. We put  $\cE= p_*( \cE')$, this  is 
a  locally free $\cO_S$--module of finite rank.
 We consider the sequence of $S$--functors in $S$--groups
 $$
 \gG \to  R_{S'/S}( \gG \times_S S') \xrightarrow{R_{S'/S}(i)} R_{S'/S}( \GL(\cE') ) \to \GL( \cE)
 $$
 where $R_{S'/S}$ stands for the Weil restriction
 and the first map is the diagonal map which is a monomorphism.
 Since the Weil restriction for $S'/S$ transforms monomorphisms into monomorphisms,
 the map $R_{S'/S}(i)$ is also a monomorphism and 
  so is the last map  since 
 $R_{S'/S}( \GL(\cE') )(T) \subset  \GL( \cE)(T)$
 corresponds to automorphisms of $\cE \otimes_{\cO_S} \cO_T$
 which are $\cO_{S'} \otimes_{\cO_S} \cO_T$-linear.
 Since all maps are monomorphisms, we conclude that $\gG$ is linear.
\end{proof}

\section{Tori and group of multiplicative type}\label{section_torus}

\subsection{Maximal tori of linear groups}
Let $\cA$ be an Azumaya $\cO_S$--algebra.
We consider the reductive $S$--group scheme $\GL_1(\cA)$.
Let  $\cB \subset \cA$ be  a separable $\cO_S$--subalgebra of $\cA$
which is a locally free $\cO_S$--module of finite rank and
which is locally a direct summand of $\cA$ as $\cO_S$--module.
We get a monomorphism of reductive $S$-group schemes
$\GL_1(\cB) \to \GL_1(\cA)$.
In particular, if $\cB$ is commutative, $\cB$ is a finite 
\'etale $\cO_S$--algebra and $\GL_1(\cB)$ is a torus.
We come now to Grothendieck's definition of maximal \'etale subalgebras.

\begin{sdefinition} \cite[D\'ef. 5.6]{G}.
We say that a finite \'etale $\cO_S$--subalgebra  $\cC \subset \cA$ is 
{\it maximal} if $\cC$ is locally a direct summand of $\cA$ as $\cO_S$--module
and  if  the rank of $\cC \otimes_{\cO_S} \kappa(s)$  is the degree of $\cA_s \otimes_{\cO_S} \kappa(s)$
for each $s \in S$. 
\end{sdefinition}

If $\cC \subset \cA$ is  maximal finite \'etale $\cO_S$--subalgebra of 
$\cA$,  then the torus $\GL_1(\cC)$ is a maximal torus of $\GL_1(\cA)$
since it is the case on geometric fibers.
According to \cite[\S 7.5]{G}, all maximal $S$--tori of $\GL_1(\cA)$ occur in that manner;
this is part (3) of the following enlarged statement.

\begin{sproposition} \label{prop_torus}
 Let $\gS$ be a subgroup scheme of multiplicative type of $\GL_1(\cA)$
 and put $\cB=\cA^\gS$, the centralizer subalgebra of $\gS$.
 
 \smallskip
 
 \noindent (1) $\cB$ is a separable $\cO_S$--algebra which is locally a direct summand of $\cA$ as $\cO_S$--module.
 
 \smallskip
 
 \noindent (2) Let $\cC$ be the center of $\cB$; this is a
 finite \'etale $\cO_S$--algebra of positive rank which is locally a direct summand of $\cB$ (and $\cA$)
 as $\cO_S$--module.
 We have the closed immersions
 \[
  \gS \, \subset \, \GL_1(\cC)  \, \subset \, \GL_1(\cA).
 \]

 \smallskip
 
 \noindent (3)   If $\gS$ is a maximal torus of $\GL_1(\cA)$,
 then $\gS= \GL_1(\cC)$ and $\cC$ is a maximal finite \'etale $\cO_S$--subalgebra of 
 $\cA$.
 
 \smallskip
 
 \noindent (4) If $\gS$ is of constant type, then $\gS$ is isotrivial.
 
 \end{sproposition}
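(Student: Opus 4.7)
The plan is to work fppf-locally on $S$ where $\gS$ becomes diagonalizable and $\cA$ acquires a weight decomposition, read off the structural properties there, descend to $S$, and then verify separability fiberwise via the double centralizer theorem. The key external inputs are the structure theory of separable and Azumaya algebras from \cite{CF}, \cite{KO}, and Lemma~\ref{lem_brian}.

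For part (1), the group $\gS$ acts on $\cA$ by inner automorphisms. Fppf-locally on $S$, $\gS$ is diagonalizable with character group $M$, so the $\cO_S$-module $\cA$ decomposes as $\cA = \bigoplus_{m \in M} \cA_m$ and the invariants satisfy $\cA^\gS = \cA_0$. This exhibits $\cB$ locally as a direct summand of $\cA$, and fpqc descent of this splitting shows $\cB$ is locally a direct summand globally, in particular locally free of finite rank. By the standard fibral criterion for separability, it then suffices to check on each geometric fiber: $\cB \otimes_{\cO_S} \overline{\kappa(s)}$ is the centralizer in $\cA_{\bar s} \cong M_n(\overline{\kappa(s)})$ of a diagonalizable subgroup, and the double centralizer theorem identifies this with a finite product of matrix algebras, which is separable.

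For part (2), the center of a separable $\cO_S$-algebra that is locally free of finite rank is a finite étale $\cO_S$-algebra locally a direct summand (\cite{CF} \S 2.5, \cite{KO}); composing with the local splitting of $\cB$ in $\cA$ makes $\cC$ locally a direct summand of $\cA$ as well, and positivity of its rank is clear fiberwise from the product-of-matrix-algebras decomposition. The inclusion $\gS \subset \GL_1(\cC)$ follows because sections of $\gS$, being units of $\cB = \cA^\gS$, automatically commute with every section of $\cB$ and so land in the units of the center; the inclusion $\GL_1(\cC) \subset \GL_1(\cA)$ is immediate. For (3), both $\gS$ and $\GL_1(\cC)$ are $S$-tori with $\gS \subset \GL_1(\cC)$, so maximality of $\gS$ forces equality. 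The maximality of $\cC$ as an étale subalgebra then reduces fiberwise to the fact that the rank of a maximal torus of $\GL_1(\cA_{\bar s})$ equals the degree of $\cA_{\bar s}$.

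For part (4), apply Lemma~\ref{lem_brian} to $\cC$ to obtain a finite étale cover $S' \to S$ of degree $N!$ (with $N = \rank \cC$) splitting $\cC$, so that $\GL_1(\cC) \times_S S' \cong \GG_{m,S'}^N$. The pullback $\gS_{S'}$ is a multiplicative-type subgroup of constant type of the split torus, determined by a surjection $\ZZ^N \twoheadrightarrow M$ of the character sheaf, locally constant for the étale topology. Since the set of surjections $\ZZ^N \twoheadrightarrow M$ is finite, the resulting locally constant étale sheaf is represented by a finite étale cover $S'' \to S'$ over which $\gS_{S''}$ is split, and $S'' \to S$ trivializes $\gS$. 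The hardest step is this last one: parts (1)--(3) amount to the known theory of centralizers and separable subalgebras, while (4) requires passing from étale-local triviality to finite-étale-local triviality without a noetherian assumption, and the embedding $\gS \subset \GL_1(\cC)$ into an isotrivial ambient torus provided by the earlier parts is precisely what reduces this to the finiteness of a set of quotient maps of lattices.
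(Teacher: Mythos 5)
Your treatment of parts (1)--(3) is essentially sound and close in spirit to the paper's: the paper reduces \'etale-locally to $\cA=\Mat_n(R)$ and $\gS=D(M)$, decomposes $R^n$ into weight spaces and computes $\cB=\prod_{m}\End_R(R^n_m)$ directly (whence separability, since a finite product of endomorphism algebras of finitely generated projectives is separable), whereas you identify $\cB$ with the weight-zero piece $\cA_0$ of $\cA$ and then invoke the fibral criterion for separability plus the double centralizer theorem. Both routes work; the paper's is marginally more self-contained since it never needs to pass to geometric fibers.

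Part (4), however, contains a genuine gap. You assert that ``the set of surjections $\ZZ^N\twoheadrightarrow M$ is finite'' and use this to produce a finite \'etale cover $S''\to S'$ splitting $\gS$. This finiteness is false whenever $M$ has an element of infinite order: already for $M=\ZZ$ and $N=2$ the surjections are the pairs $(a,b)$ with $\gcd(a,b)=1$, an infinite set, and more generally $\mathrm{Surj}(\ZZ^N,\ZZ^r)$ is infinite for $N>r\geq 1$. This is precisely the case that matters for the applications (the radical torus and maximal tori), so the step collapses where it is most needed. The danger of this style of argument is illustrated by the paper itself: the character sheaf of $\gS$ alone is classified by an $\Aut(M)$-torsor with $\Aut(M)$ infinite (e.g.\ $\GL_2(\ZZ)$), and such torsors need not be isotrivial --- that is exactly Grothendieck's rank-two counterexample in Examples \ref{ex_groth}. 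What rescues the situation is not finiteness of a set of lattice quotients but the rigidity coming from the fixed embedding into the split torus: the kernel of $\ZZ^N_{S'}\to\underline{M}(\gS_{S'})$ is a subsheaf of a \emph{constant} sheaf, so its stalks vary locally constantly and one gets a clopen partition of $S'$ on each piece of which $\gS$ is diagonalizable; this is \cite[IX.2.11.(i)]{SGA3}, which is what the paper cites. The constant-type hypothesis then forces all pieces to be $D(M)$ for the same $M$, and these glue to $D(M)$ over the whole of $S'$ --- no further cover $S''$ is required. Replacing your finiteness claim by this clopen-partition argument repairs the proof.
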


\begin{proof}
According to \cite[IX.2.5]{SGA3}, the map $\gS \to \GL_1(\cA)$ is a closed 
immersion so that  $\gS$ is of finite type over $S$.
 As a preliminary observation we notice that
 (1), (2), (3)  are local for the \'etale topology.
 We can assume that $S=\Spec(R)$, $\cA=\Mat_n(R)$ and that $\gS=D(M)$ for 
 $M$ an abelian group. Note that $M$ is finitely generated since  $\gS$ is
 of finite type ({\it ibid}, VII.2.1.b).

\smallskip

\noindent (1) We  consider the $M$--grading 
$R^n= \bigoplus_{m \in M} R^n_m$.
The $R$--modules $(R^n_m)_{m \in M}$ are finitely generated projective
so locally free of finite rank. 
There is a finite subset $M' \subset M$ such that 
$R^n= \bigoplus_{m \in M'} R^n_m$.
Then $\cB=  \prod_{m \in M'} \End_{R}( R^n_m)$
and each $\End_{R}(R^n_m)$ is a separable  $R$-algebra which is locally free of finite rank
\cite[III, example 2.8]{KO}.
Since a product of separable algebras is 
a separable algebra ({\it ibid}, III, proposition 1.7),
it follows that $\cB$ is a separable  $R$-algebra.
Furthermore $\cB$ is locally a direct summand of $\Mat_n(R)$
as $R$-module.

 \smallskip
 
 \noindent (2) Let $\cC$ be the center of $\cB$.
We have $R$--monomorphisms of groups  $\gS \, \subset \, \GL_1(\cC)  \, \subset \, \GL_1(\cA)$.
 
 \smallskip
 
 \noindent (3)  We assume that  $\gS$ is a maximal torus of $\GL_1(\cA)$
and want to establish that  $\gS= \GL_1(\cC)$. So $\GL_1(\cC)$ is an $R$--torus containing $\gT$ and since maximality
holds also in the naive sense \cite[XII.1.4]{SGA3},
we conclude that $\gT=\GL_1(\cC)$.

 \smallskip
 
 \noindent (4) 
 We come back to the initial setting (i.e. without localizing).
 We want to show that $\gS$ is isotrivial. 
According to \cite[ch.0, 5.4.1]{EGA1}, for each integer $l \geq 0$,
$S_l= {\bigl\{ s \in S \, \mid \, \rank( \cC_{\kappa(s)}) =l  \bigr\}}$ is an open subset of $S$
so that we have a decomposition in clopen  subschemes  $S=\coprod\limits_{l \geq 0} S_l$.
Without loss of generality we can assume that $\cC$ is locally free of rank $l$.
Lemma \ref{lem_brian}  provides a finite \'etale cover $S'$ of $S$
such that $\cC \otimes_{\cO_S} \cO_{S'} \cong (\cO_{S'})^l$.
Hence $\GL_1(\cC) \times_S S \cong  (\GG_m)^l \times_S S'$.
It follows that  $\gS \times_S S'$ is a subgroup $S'$--scheme of   $(\GG_m)^l \times_S S'$
of multiplicative type. 
According to \cite[IX.2.11.(i)]{SGA3} there exists a partition in clopen subsets
$S'= \sqcup_{i \in I} S'_i$ such that each  $\gS \times_S S'_i$ is diagonalizable.
Since  $\gS \times_S S'$ is  of  constant type we conclude that  $\gS \times_S S'$
is diagonalizable.
 \end{proof}

\subsection{Characterization of isotrivial groups of multiplicative type}

\begin{stheorem}\label{thm_torus} Let $\gS$ be an $S$-group scheme of multiplicative type of finite type and of  
constant type. Then the following are equivalent:

\smallskip

(i)  $\gS$ is isotrivial;

\smallskip

(ii) $\gS$ is linear;

\smallskip

(iii) $\gS$ is an $S$--subgroup scheme  of an $S$--group scheme $\GL_1(\cA)$
where $\cA$ is an Azumaya $\cO_S$--algebra.
 
\end{stheorem}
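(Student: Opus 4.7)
The plan is to close the cycle $(i) \Rightarrow (ii) \Rightarrow (iii) \Rightarrow (i)$; two of the three implications are essentially formal given what is already available, and the remaining one reduces to finite \'etale descent.

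For $(iii) \Rightarrow (ii)$: any Azumaya $\cO_S$--algebra $\cA$ is locally free of finite rank as an $\cO_S$--module, and left multiplication yields a closed immersion $\GL_1(\cA) \hookrightarrow \GL(\cA)$, so $(iii)$ forces linearity. For $(ii) \Rightarrow (iii)$: given a monomorphism $\gS \hookrightarrow \GL(\cE)$ with $\cE$ locally free of finite rank, I would replace $\cE$ by $\cE \oplus \cO_S$ so that its rank is $\geq 1$ everywhere; then $\cA := {\cE}nd(\cE \oplus \cO_S)$ is an Azumaya $\cO_S$--algebra with $\GL_1(\cA) = \GL(\cE \oplus \cO_S)$, and the composite $\gS \hookrightarrow \GL(\cE) \hookrightarrow \GL_1(\cA)$ furnishes $(iii)$. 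Finally, $(iii) \Rightarrow (i)$ is exactly Proposition \ref{prop_torus}(4), applied with the ambient Azumaya algebra coming from $(iii)$ and the constant-type hypothesis provided by the statement of the theorem.

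Only $(i) \Rightarrow (ii)$ requires a genuine construction. Since $\gS$ is isotrivial, I pick a finite \'etale cover $p : S' \to S$ that splits $\gS$; then $\gS \times_S S' \cong D(M)_{S'}$ for some finitely generated abelian group $M$ (finite generation coming from $\gS$ being of finite type). Choosing any surjection $\ZZ^N \twoheadrightarrow M$ and applying Cartier duality produces a closed immersion $D(M) \hookrightarrow \GG_m^N$; composing with the diagonal inclusion $\GG_m^N \hookrightarrow \GL_N$ exhibits $\gS \times_S S'$ as linear over $S'$. Since $S' \to S$ is finite and locally free (a finite \'etale cover is in particular finite locally free), Lemma \ref{lem_weil} descends linearity from $S'$ to $S$.

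No substantive obstacle remains: the only non-formal input is Proposition \ref{prop_torus}(4) for $(iii) \Rightarrow (i)$, and the descent of linearity under finite locally free covers is already packaged in Lemma \ref{lem_weil}. The constant-type hypothesis is used only in the two places where it is indispensable, namely to guarantee that the split form $\gS \times_S S'$ in the proof of $(i) \Rightarrow (ii)$ is a uniform $D(M)_{S'}$ rather than varying fibrewise, and to apply Proposition \ref{prop_torus}(4).
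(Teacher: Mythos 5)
Your proof is correct and follows essentially the same route as the paper: $(ii)\Rightarrow(iii)$ via $\mathcal{E}nd$ of the representing module, $(iii)\Rightarrow(i)$ by invoking Proposition \ref{prop_torus}(4), and $(i)\Rightarrow(ii)$ by splitting $\gS$ over a finite \'etale cover and descending with Lemma \ref{lem_weil}. You merely spell out the details the paper leaves implicit (the embedding $D(M)\hookrightarrow \GG_m^N\hookrightarrow \GL_N$ over the cover, and the rank adjustment $\cE\oplus\cO_S$), which is fine.
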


 \begin{proof}
 $(i) \Longrightarrow (ii)$. This follows from Lemma \ref{lem_weil}.

 \smallskip
 
 \noindent $(ii) \Longrightarrow (iii)$. By definition we have a monomorphism
 $\gS \to \GL(\cE)$ where $\cE$ is  a locally free  $\cO_{S}$--module  of finite rank. 
 Since $\End_{\cO_S}(\cE)$ is an Azumaya $\cO_S$--algebra, we get (iii).

 \smallskip
 
 \noindent $(iii) \Longrightarrow (i)$. This follows of Proposition \ref{prop_torus}.(4).
 \end{proof}

 \begin{sexamples} \label{ex_groth} {\rm (a) Grothendieck constructed
 a scheme $S$ and an $S$--torus $\gT$ which is locally trivial of rank $2$
  but which is not isotrivial \cite[\S X.1.6]{SGA3} (e.g. $S$ consists of
  two copies of the projective line over a field pinched at $0$ and $\infty$).
  Theorem \ref{thm_torus} shows that such an $S$--torus is not linear.
  \newline
  (b) Also there exists a  local ring $R$ and an $R$--torus $\gT$ of rank $2$ which is not isotrivial \cite[\S 1.6]{SGA3};
  the ring $R$  can be taken as the local ring of an algebraic curve at a double point.
Theorem \ref{thm_torus} shows that such an $R$--torus is not linear.
    }
\end{sexamples}

\section{Reductive case}

For stating the complete version of our main result,
we need more notation.
As in the introduction, $\gG$ is a reductive $S$--group 
scheme of constant type and $G$ is the underlying Chevalley
$\ZZ$--group scheme. 
We denote by $\Aut(G)$ the automorphism group scheme of $G$
and we have an exact sequence of $\ZZ$--group schemes \cite[th. XXIV.1.3]{SGA3}
$$
1 \to G_{ad} \to \Aut(G) \to \Out(G) \to 1.
$$
We remind the reader of the representability of the fppf sheaf
$\underline{\mathrm{Isom}}(G_S, \gG)$ by 
a $\Aut(G)_S$--torsor $\mathrm{Isom}(G_S, \gG)$ 
defined in \cite[XXIV.1.8]{SGA3}. The contracted product
$$\mathrm{Isomext}(G_S, \gG) := \mathrm{Isom}(G_S, \gG) 
\wedge^{\Aut(G)_S} \Out(G)_S$$ is an $\Out(G)_S$--torsor ({\it ibid}, 1.10)
which encodes the isomorphism class of the quasi-split form of $\gG$.

\begin{stheorem}\label{thm_main_complete}
The following are equivalent:

\smallskip

(i) The torus $\rad(\gG)$  is isotrivial;

\smallskip 

(ii) the $\Out(G)_S$--torsor $\Isomext(G_S, \gG)$ is isotrivial;

\smallskip

(iii) $\gG$ is linear;

\smallskip

(iv) $\rad(\gG)$ is linear.

\smallskip

\noindent Furthermore if $S$ is affine, we can take a faithful linear representation in some $\GL_n$ for $(iii)$ and $(iv)$.
\end{stheorem}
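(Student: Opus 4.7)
My plan is to establish (i) $\Leftrightarrow$ (iv) directly from Theorem \ref{thm_torus}, the easy implication (iii) $\Rightarrow$ (iv), and then to close the loop via the chain (i) $\Leftrightarrow$ (ii) $\Rightarrow$ (iii); the affine supplement will follow from Lemma \ref{lem_affine}. The equivalence (i) $\Leftrightarrow$ (iv) is Theorem \ref{thm_torus} applied to $\rad(\gG)$, which is of multiplicative type, of finite type, and of constant type (all inherited from $\gG$, since $\rad$ is a functorial construction on the fibers). For (iii) $\Rightarrow$ (iv), the composite of any monomorphism $\gG \hookrightarrow \GL(\cE)$ with the closed immersion $\rad(\gG) \hookrightarrow \gG$ remains a monomorphism.

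For (i) $\Leftrightarrow$ (ii), I would exploit the canonical restriction map $\Out(G) \to \Aut(\rad(G))$, well-defined because inner automorphisms of $G$ act trivially on the central subtorus $\rad(G)$. Its kernel $K$ is finite: a class in $K$ restricts to an outer automorphism of the semisimple quotient $G/\rad(G)$, whose outer automorphism group is finite (automorphisms of the Dynkin diagram). If (ii) holds, the pushforward of the isotrivial torsor $\Isomext(G_S, \gG)$ to $\Aut(\rad(G))_S$ is an isotrivial torsor classifying $\rad(\gG)$, so $\rad(\gG)$ is isotrivial, giving (i). Conversely, splitting $\rad(\gG)$ over a finite \'etale cover $S' \to S$ reduces the torsor $\Isomext(G_S, \gG)_{S'}$ to a $K_{S'}$-torsor; but any torsor under a finite constant group scheme is finite \'etale and hence trivialized by itself, so $\Isomext(G_S, \gG)$ is isotrivial over $S$.

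For (ii) $\Rightarrow$ (iii), the quasi-split form $\gG^{qs}$ of $\gG$ splits over a finite \'etale cover $S' \to S$ trivializing $\Isomext$, so $\gG^{qs}_{S'} \cong G_{S'}$ admits the standard faithful Chevalley representation, and Lemma \ref{lem_weil} delivers a faithful representation $\rho : \gG^{qs} \to \GL(\cE)$ over $S$. Since $\gG$ is an inner form of $\gG^{qs}$ classified by some $(\gG^{qs})_{ad}$-torsor $\sigma$, I would twist $\rho$ by $\sigma$ to obtain a representation $\rho^{\sigma}: \gG \to \GL(\cE^\sigma)$, where $\cE^\sigma$ is the sheaf obtained by twisting $\cE$ along the $(\gG^{qs})_{ad}$-action provided by $\rho$. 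Both local freeness of $\cE^\sigma$ and the monomorphism property of $\rho^\sigma$ descend from the trivializing cover of $\sigma$ by fpqc descent, giving (iii). The affine case is then immediate from Lemma \ref{lem_affine}.

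I expect the main obstacle to be the inner-twist step in (ii) $\Rightarrow$ (iii): one must verify carefully that the twisted sheaf $\cE^\sigma$ remains locally free of finite rank on $S$ and that $\rho^\sigma$ is indeed a closed immersion. Both statements reduce to fpqc descent of standard properties and are essentially Margaux's equivariant construction mentioned in the introduction, but the setup is non-abelian and the compatibility of the twist with the $(\gG^{qs})_{ad}$-action on $\cE$ induced by $\rho$ must be handled explicitly.
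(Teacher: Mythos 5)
Your overall architecture of implications is sound, and (i) $\Leftrightarrow$ (iv) via Theorem \ref{thm_torus} together with the trivial (iii) $\Rightarrow$ (iv) matches the paper. The genuine gap is exactly where you suspect it: the twisting step in (ii) $\Rightarrow$ (iii). To twist a representation $\rho:\gG^{qs}\to\GL(\cE)$ by a $(\gG^{qs})_{ad}$-torsor $\sigma$ you need $(\gG^{qs})_{ad}$ to act on $\GL(\cE)$ compatibly with its action on $\gG^{qs}$, i.e.\ you need $\rho$ to extend to the semidirect product $\gG^{qs}\rtimes(\gG^{qs})_{ad}$. The representation you obtain from Lemma \ref{lem_weil} (a Weil restriction of a representation defined over $S'$) carries no such equivariance structure, and "conjugation by $\rho(g)$'' does not make sense for $g$ in the adjoint group. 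The paper supplies precisely this missing ingredient: a faithful representation of $G\rtimes G_{ad}$ over $\ZZ$ (Bruhat--Tits \cite[\S 1.4.5]{BT}), which is $G_{ad}$-equivariant by construction and can therefore be twisted by any $G_{ad}$-torsor. The order of operations also matters: the paper first base-changes to the finite \'etale cover $S'$ trivializing $\Isomext(G_S,\gG)$, so that $\gG_{S'}$ is an inner form of the \emph{split} group $G_{S'}$ and the $\ZZ$-representation can be twisted there, and only afterwards invokes Lemma \ref{lem_weil} to descend \emph{linearity} (not the representation) to $S$. Your plan descends the representation first and then tries to twist over $S$, which is where the equivariance problem becomes unavoidable. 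As written, this implication does not go through.

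Your route for (i) $\Leftrightarrow$ (ii) via the restriction map $\Out(G)\to\Aut(\rad(G))$ is genuinely different from the paper, which instead quotes \cite[XXIV.2.16]{SGA3} to get the exact sequence $1\to G_{ad}\to H\to F\to 1$ with $F$ finite constant and works with $H$-torsors. Your approach is more elementary in spirit and can be completed, but two points are asserted rather than proved. First, finiteness of $K=\ker(\Out(G)\to\Aut(\rad(G)))$: your argument gives a map from $K$ to a finite group but not its injectivity; one must check that an automorphism of the based root datum inducing the identity on the Dynkin diagram and on $X^*(\rad(G))$ is trivial (a short computation using the nondegeneracy of the pairing between $\QQ R$ and $\QQ R^\vee$). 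Second, $\Out(G)\to\Aut(\rad(G))$ is in general not surjective, so triviality of the pushforward to $\Aut(\rad(G))$ only directly controls the induced torsor under the image subgroup $Q$; you need the extra (true, but not free) observation that a $Q$-torsor whose pushout to the constant group $\Aut(\rad(G))$ is trivial is itself trivial, via a clopen decomposition of $S$. With these repairs your (i) $\Leftrightarrow$ (ii) works and avoids the heavier SGA3 citation, but the proof of the theorem as a whole still fails at (ii) $\Rightarrow$ (iii).
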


\begin{proof}$(i) \Longrightarrow (ii)$. We assume that $\rad(\gG)$
 is isotrivial and want to show that the $\Out(G)_S$--torsor $\cF=\Isomext(G_S, \gG)$ is isotrivial.
 In other words, we want to show that there exists a finite \'etale cover $S'$ of 
 $S$ such that $\gG \times_S S'$ is an inner form of $G$.
 
 Without loss of generality we can assume that $\rad(\gG)$ is a split torus.
 We quote now \cite[XXIV.2.16]{SGA3} for the Chevalley group $G$ over $\ZZ$
 which introduces  the $\ZZ$-group scheme
 $$H= \ker\bigl( \Aut(G) \to \Aut(\rad(G) \bigr);$$
 furthermore there is an equivalence of categories
 between the category of \break $H$-torsors over $S$
 and the category of pairs $(\gM, \phi)$ where $\gM$ is an $S$--form
 of $G$ and $\phi: \rad(G)_S \simlgr \rad(\gM)$.
 
 Since $\rad(\gG)$ is split, we choose an isomorphism  $\phi: \rad(G)_S \simlgr \rad(\gG)$ 
 and consider an $H$-torsor $\gP$ mapping to an object isomorphic to 
 $(\gG,\phi)$. Furthermore the quoted reference provides an exact sequence
 of $\ZZ$--group schemes
 $$
 1 \to G_{ad} \to H  \xrightarrow{p} F \to 1
 $$
 where $F$ is finite \'etale over $\ZZ$, so is constant.
 We denote by $S'= \gM \wedge^{H_S} F_S$ the contracted product of $\gM$ and $F_S$ with respect to $H_S$;
 this is an $F$--torsor over $S$,  hence is  a finite \'etale cover of $S$.
 It follows that $\gP \times_S S'$ admits a reduction to 
 a $G_{ad,S'}$--torsor $\gQ'$. Since the map $G_{ad} \to H \to \Aut(G)$
 is the canonical map, we conclude that $\gG_{S'} \cong \,   ^{\gQ'}\!\!G$ is an inner  form of $G$.

 \smallskip

 \noindent $(ii) \Longrightarrow (iii)$. Our assumption is that there 
 exists a finite \'etale cover $S'/S$ which splits 
 the $\Out(G)$--torsor $\mathrm{Isomext}(G_S, \gG)$.
 Lemma \ref{lem_weil} permits us to replace $S$ by $S'$, so we can assume
that  the $\Out(G)$--torsor $\mathrm{Isomext}(G_S, \gG)$ is trivial;
 that is, $\gG$ is an inner form of $G$.
 There exists a $G_{ad}$--torsor $\gQ$ over $S$
 such that $\gG \cong \, {^\gQ \! G}$. 
 Since $G \rtimes G_{ad}$ is defined over $\ZZ$, it admits
 a faithful representation $\rho: G \rtimes G_{ad} \to \GL_n$ over $\ZZ$ \cite[\S 1.4.5]{BT}.
 The map $\rho$ is then $G_{ad}$--equivariant and can be twisted
 by the $G_{ad}$--torsor $\gQ$. We obtain a faithful representation
 ${^\gQ \! G} \rtimes {^\gQ \! G_{ad}}  \to {^\gQ \! \GL_n}= \GL(\cE)$ where
 $\cE$ is the locally free $\cO_S$--module of rank $n$
 which is the twist of $(\cO_S)^n$ by the $\GL_n$--torsor
 $\gQ  \wedge^{G_{ad}} \GL_n$. Thus ${^\gQ \! G}$ is linear. 
 
  \smallskip

 \noindent $(iii) \Longrightarrow (iv)$. Obvious.
 
 \smallskip

 \noindent $(iv) \Longrightarrow (i)$. 
 Since  $\rad(\gG)$ is a form of $\rad(G)$, it is of constant rank
  and Theorem \ref{thm_torus} shows that $\rad(G)$ is isotrivial.
  
\smallskip
  
Finally the refinement for  $S$ affine follows from Lemma \ref{lem_affine}.  
\end{proof}

 \begin{scorollary}\label{cor_main}
 Under the assumptions of $\gG$, let $\gG^{qs}$ be the quasi-split form of 
 $\gG$.
 Then $\gG$ is linear if and only if $\gG^{qs}$ is
 linear.
 \end{scorollary}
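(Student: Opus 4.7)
The plan is to reduce everything to condition (ii) of Theorem \ref{thm_main_complete}, namely the isotriviality of the $\Out(G)_S$--torsor $\Isomext(G_S, \gG)$, which is a property that depends only on the quasi-split form of $\gG$.

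More precisely, I would proceed as follows. First, recall that the quasi-split form $\gG^{qs}$ is, by definition, the twist of $G_S$ by the $\Out(G)_S$--torsor $\Isomext(G_S, \gG)$, lifted to an $\Aut(G)_S$--torsor through the canonical section $\Out(G) \to \Aut(G)$ coming from a pinning of $G$ (so that $\Aut(G)$ is a semi-direct product $G_{ad} \rtimes \Out(G)$). From this construction the $\Out(G)_S$--torsors $\Isomext(G_S, \gG)$ and $\Isomext(G_S, \gG^{qs})$ are canonically isomorphic: pushing the $\Aut(G)_S$-torsor $\Isom(G_S, \gG^{qs})$ along $\Aut(G) \to \Out(G)$ reproduces $\Isomext(G_S, \gG)$ by construction.

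Now apply Theorem \ref{thm_main_complete} to both group schemes. It yields
\[
\gG \text{ is linear } \Longleftrightarrow \Isomext(G_S, \gG) \text{ is isotrivial},
\]
and, applied to $\gG^{qs}$ (which is also reductive of constant type with the same Chevalley form $G$),
\[
\gG^{qs} \text{ is linear } \Longleftrightarrow \Isomext(G_S, \gG^{qs}) \text{ is isotrivial}.
\]
Since the two $\Out(G)_S$--torsors appearing on the right-hand sides are identified, the two conditions on the left are equivalent, which is the desired corollary.

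There is no real obstacle here: the corollary is essentially a bookkeeping consequence of Theorem \ref{thm_main_complete} together with the definition of $\gG^{qs}$. The only point worth stating carefully is the identification $\Isomext(G_S, \gG) \cong \Isomext(G_S, \gG^{qs})$, for which one can cite \cite[XXIV.3.11]{SGA3} (the construction of the quasi-split form attached to an outer torsor) so that no additional computation is needed.
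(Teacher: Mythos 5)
The paper leaves this corollary without an explicit proof, treating it as an immediate consequence of Theorem \ref{thm_main_complete}, and your argument is exactly the intended one: both $\gG$ and $\gG^{qs}$ have the same $\Out(G)_S$--torsor $\Isomext(G_S,\cdot)$, so criterion (ii) of the theorem applies identically to both. Your proof is correct and takes essentially the same approach as the paper.
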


The next corollary slightly generalizes  a result by Thomason \cite[cor. 3.2]{T}.

\begin{scorollary}\label{cor_main2} Assume that either

\smallskip

\noindent (i) $S$ is locally noetherian and  geometrically unibranch
(e.g. normal);

\smallskip

\noindent (ii) $\rad(\gG)$ is of rank $\leq 1$ (in particular if $G$ is semisimple).

\smallskip

\noindent Then $\gG$ is linear.
\end{scorollary}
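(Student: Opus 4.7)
The plan is to reduce both cases to Theorem \ref{thm_main_complete}, which asserts that $\gG$ is linear if and only if $\rad(\gG)$ is isotrivial. So the only thing to verify is the isotriviality of $\rad(\gG)$ under the hypothesis of $(i)$ or $(ii)$; the rest is formal.

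Case $(ii)$ is almost immediate from material already developed in the paper. If $\rad(\gG)$ has rank $0$, i.e.\ $\gG$ is semisimple, then isotriviality is trivial. If $\rad(\gG)$ has rank one, Subsection \ref{subsec-rank1} identifies every rank-one $S$--torus as a twist $R_{S'/S}(\GG_{m,S'})/\GG_m$ arising from an associated finite \'etale degree-two cover $S'\to S$; thus $\rad(\gG)$ splits after pullback to $S'$ and so is isotrivial, and Theorem \ref{thm_main_complete} delivers linearity.

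Case $(i)$ reduces to the classical isotriviality result for tori over a locally noetherian geometrically unibranch base (SGA 3, Exp.\ X): the \'etale fundamental groupoid of such an $S$ acts on the geometric character lattice of a torus of finite type through a finite quotient, so any such torus splits over a finite \'etale cover of $S$. Since $\rad(\gG)$ is of constant rank (being a form of $\rad(G)_S$), this applies to it directly and yields isotriviality. Applying Theorem \ref{thm_main_complete} once more finishes the argument.

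The main obstacle is ensuring that the classical result invoked in case $(i)$ is used in the exact form required, namely producing a \emph{single} finite \'etale cover of $S$ that trivializes $\rad(\gG)$, rather than only local isotriviality for the Zariski topology; this is precisely where the geometrically unibranch hypothesis (stronger than mere normality of the stalks) is indispensable, letting the local finite \'etale splittings be combined into a global one, as needed to feed into Theorem \ref{thm_main_complete}.
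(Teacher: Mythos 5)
Your proof is correct and follows essentially the same route as the paper: both cases reduce to isotriviality of $\rad(\gG)$ --- via the SGA~3, Exp.~X result for geometrically unibranch bases in case (i) and via the rank-one torus discussion of \S\ref{subsec-rank1} in case (ii) --- and then invoke the main theorem. One small slip in your closing remark: being geometrically unibranch is \emph{weaker} than normality (normal local rings are unibranch, hence the ``e.g.\ normal'' in the statement), not stronger.
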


\begin{proof}
 In case (i),  the torus $\rad(\gG)$ is isotrivial \cite[X.5.16]{SGA3}.
 In case (ii), we have $\rad(G)=1$ or $\GG_m$ (since $G$ is split), so 
 that $\rad(\gG)$ is
 split by a quadratic \'etale cover of $S$ (\S \ref{subsec-rank1}), hence is isotrivial.
 Hence Theorem \ref{thm_main} implies that $\gG$ is linear.
\end{proof}

The next corollary extends
 Demazure's characterization of locally isotrivial 
 reductive group schemes \cite[XXIV.4.1.5]{SGA3}.
 
 \newpage

\begin{scorollary}\label{cor_main_complete}
The following are equivalent:

\smallskip 

(i) $\gG$ is locally (resp.\ semilocally) isotrivial;

\smallskip

(ii) The torus $\rad(\gG)$  is locally (resp.\ semilocally) isotrivial;

\smallskip 

(iii) the $\Out(G)_S$--torsor $\Isomext(G_S, \gG)$ is locally (resp.\ semilocally) isotrivial;

\smallskip

(iv) $\gG$ is locally (resp.\ semilocally) linear;

\smallskip

(v) $\rad(\gG)$ is locally (resp.\ semilocally) linear.

\end{scorollary}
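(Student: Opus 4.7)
The plan is to combine Theorem \ref{thm_main_complete} with Demazure's local and semilocal characterizations of isotrivial reductive group schemes \cite[XXIV.4.1.5]{SGA3} and \cite[XXIV.3.5]{SGA3}.

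First, the equivalences (ii) $\Leftrightarrow$ (iii) $\Leftrightarrow$ (iv) $\Leftrightarrow$ (v) in either the locally or semilocally variant follow by applying Theorem \ref{thm_main_complete} over each open subscheme on which one of the conditions is known to hold. In the locally variant, any Zariski cover $\{U_i\}$ witnessing one of (ii)--(v) automatically witnesses all four, since Theorem \ref{thm_main_complete} applied to the reductive $U_i$-group scheme $\gG \times_S U_i$ converts any one of them into the other three. In the semilocally variant, given a finite set of points $\{s_1,\dots,s_n\}$ contained in an affine open of $S$, the open neighborhood $U$ of these points provided by any one of (ii)--(v) simultaneously witnesses the other three via Theorem \ref{thm_main_complete} applied to $\gG \times_S U$.

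It remains to establish (i) $\Leftrightarrow$ (ii). The direction (i) $\Rightarrow$ (ii) is immediate from functoriality of $\rad$: a finite \'etale cover of $U$ that splits $\gG \times_S U$ automatically splits $\rad(\gG) \times_S U$. The converse in the locally variant is precisely Demazure's criterion \cite[XXIV.4.1.5]{SGA3}. For the semilocally variant, suppose (ii) holds for $\{s_1,\dots,s_n\}$ in an affine open of $S$, and let $V$ be an open neighborhood on which $\rad(\gG) \times_S V$ is isotrivial. Writing $R$ for the semilocal ring of $V$ at these points, Demazure's semilocal criterion \cite[XXIV.3.5]{SGA3} applied to $\gG \times_S \Spec(R)$ shows that this base change is isotrivial, trivialized by some finite \'etale cover $\Spec(R') \to \Spec(R)$. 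A standard limit argument then extends such a cover to a finite \'etale cover of an open neighborhood $U \subset V$ of $\{s_1,\dots,s_n\}$ on which the splitting of $\gG$ persists, yielding (i).

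The main obstacle is the final semilocal spreading-out step: finite \'etale covers and isomorphisms of reductive group schemes of finite presentation are conditions of finite presentation, so the classical limit techniques of \cite[$_4$.8]{EGA4} apply, but one must verify carefully that the extended cover still trivializes $\gG$ on the open neighborhood rather than merely at its generic semilocalization. Otherwise the argument is essentially bookkeeping, reducing all content to Theorem \ref{thm_main_complete} and Demazure's classical results.
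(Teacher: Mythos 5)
Your proof is correct and follows the same overall route as the paper: the equivalences (ii)$\Leftrightarrow$(iii)$\Leftrightarrow$(iv)$\Leftrightarrow$(v) are obtained by applying Theorem \ref{thm_main_complete} over the open subschemes witnessing each localized condition, and (i)$\Leftrightarrow$(ii) is reduced to Demazure. The one place you diverge is the semilocal variant of (ii)$\Rightarrow$(i): you read \cite[XXIV.3.5]{SGA3} as a statement over semilocal \emph{rings} and therefore bolt on a limit/spreading-out argument from $\Spec(R)$ back to an open neighborhood, which you yourself flag as the delicate step. The paper sidesteps this entirely: \cite[XXIV.3.5]{SGA3} is stated for both the locally and semilocally isotrivial variants of reductive group schemes over a general base, so (i)$\Leftrightarrow$(ii) is literally the quoted result and no spreading-out is needed. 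Your detour is salvageable (finite \'etale covers and isomorphisms of finitely presented group schemes do descend along the limit $\Spec(R)=\varprojlim U$ by \cite[$_4$.8]{EGA4}, and the trivialization persists on some $U$ for the same reason), but it is extra work whose careful verification you leave open, whereas the intended proof is a one-line citation.
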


\begin{proof}
 In view of Theorem \ref{thm_main_complete}, it remains
 to establish the equivalence $(i) \Longleftrightarrow (ii)$.
 Since this is  precisely the quoted result \cite[XXIV.3.5]{SGA3}, the proof is complete.
\end{proof}

\begin{scorollary}\label{cor_main_complete2}
Let $\gH$ be a reductive $S$--subgroup scheme of $\gG$.
If  $\gG$ is locally (resp.\ semilocally) isotrivial, then 
$\gH$ is locally (resp.\ semilocally) isotrivial.
\end{scorollary}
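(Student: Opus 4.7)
The plan is to combine Corollary \ref{cor_main_complete} with the elementary fact that the composition of a closed immersion with a monomorphism is a monomorphism, so that linearity is inherited by subgroup schemes. First, since the type of a reductive group scheme is locally constant on $S$ (\cite[XXII.2.8]{SGA3}), after restricting to a suitable open cover of $S$ we may assume that both $\gG$ and $\gH$ are of constant type; this reduction is harmless since local and semilocal isotriviality (as well as local and semilocal linearity) are defined locally on $S$.

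Under this reduction, assume $\gG$ is locally (resp.\ semilocally) isotrivial. By the equivalence $(i) \Longleftrightarrow (iv)$ of Corollary \ref{cor_main_complete}, $\gG$ is locally (resp.\ semilocally) linear. Hence for every point $s \in S$ (resp.\ every finite subset $\{s_1,\dots,s_n\}$ contained in an affine open of $S$) there exist an open subscheme $U \subset S$ containing $s$ (resp.\ containing $s_1,\dots,s_n$), a locally free $\cO_U$-module $\cE$ of finite rank, and a group monomorphism $i: \gG \times_S U \to \GL(\cE)$. Composing $i$ with the inclusion $\gH \times_S U \hookrightarrow \gG \times_S U$ yields a group monomorphism $\gH \times_S U \to \GL(\cE)$, so that $\gH$ is locally (resp.\ semilocally) linear.

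Finally, applying the reverse implication $(iv) \Longrightarrow (i)$ of Corollary \ref{cor_main_complete} to $\gH$ (which is of constant type after the initial reduction) yields that $\gH$ is locally (resp.\ semilocally) isotrivial, as required. There is essentially no technical obstacle in this argument; the only point requiring any care is the reduction to constant type, which is automatic because type is a locally constant function, so that shrinking $S$ preserves both the hypothesis and the conclusion. The whole content of the corollary is thus the nontrivial characterization already proved in Corollary \ref{cor_main_complete} together with the trivial stability of linearity under passage to subgroup schemes.
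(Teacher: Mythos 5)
Your proof is correct and follows essentially the same route as the paper: pass from isotriviality of $\gG$ to local (resp.\ semilocal) linearity via Corollary \ref{cor_main_complete}, observe that linearity is inherited by the subgroup scheme $\gH$, and apply the reverse equivalence to $\gH$. The extra care you take about reducing to constant type is a reasonable (and harmless) addition that the paper leaves implicit.
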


\begin{proof}
 Corollary \ref{cor_main_complete} shows that $\gG$ is 
  locally (resp.\ semilocally) linear  and so is $\gH$.
  Therefore $\gH$ is locally (resp.\ semilocally) isotrivial. 
\end{proof}

\smallskip

\section{The semilocal case} \label{section_semilocal}

We assume that $S=\Spec(R)$ where $R$ is a semilocal ring
and continue to assume that the reductive $S$--group scheme
$\gG$ is of constant type.
We remind the reader that $\gG$ admits a maximal  torus
(Grothendieck, \cite[XIV.3.20 and footnote]{SGA3}).

\begin{scorollary} \label{cor_semilocal} Let $\gT$ be a maximal torus of $\gG$.
The following are equivalent:

\smallskip 

(i) $\gG$ is isotrivial;

\smallskip

(ii) The torus $\rad(\gG)$  is isotrivial;

\smallskip 

(iii) the $\Out(G)_S$--torsor $\Isomext(G_S, \gG)$ is isotrivial;

\smallskip

(iv) $\gG$ is  linear;

\smallskip

(v) $\rad(\gG)$ is  linear;

\smallskip

(vi)  $\gT$ is linear;

\smallskip

(vii)  $\gT$ is isotrivial.

\end{scorollary}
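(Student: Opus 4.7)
The plan is to combine Corollary \ref{cor_main_complete} with Theorem \ref{thm_torus}, using that $\rad(\gG) \subset \gT \subset \gG$ forms a chain of closed immersions. The key observation to start with is that, since $S = \Spec(R)$ with $R$ semilocal, the only open subscheme of $S$ containing every closed point is $S$ itself. Hence for an $S$--object the prefix ``semilocally'' in front of ``isotrivial'' or ``linear'' is redundant. Applying Corollary \ref{cor_main_complete} in its semilocal flavour therefore immediately delivers the five equivalences $(i) \Leftrightarrow (ii) \Leftrightarrow (iii) \Leftrightarrow (iv) \Leftrightarrow (v)$.

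Next I would dispose of $(vi) \Leftrightarrow (vii)$ by applying Theorem \ref{thm_torus} to $\gT$. Since $\gG$ is of constant type, the maximal torus $\gT$ is a torus of constant rank; it is therefore an $S$--group scheme of multiplicative type, of finite type, and of constant type, so the theorem applies.

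Finally I would attach $(vi)$ to the main chain through the two closed inclusions $\rad(\gG) \subset \gT \subset \gG$ (the radical is central, hence lies in every maximal torus, and maximal tori are closed subgroup schemes of $\gG$). Composition of a monomorphism $\gG \hookrightarrow \GL(\cE)$ with $\gT \hookrightarrow \gG$ yields $(iv) \Rightarrow (vi)$, while composition of a monomorphism $\gT \hookrightarrow \GL(\cE)$ with $\rad(\gG) \hookrightarrow \gT$ yields $(vi) \Rightarrow (v)$. The only substantive input is the semilocal Demazure theorem \cite[XXIV.3.5]{SGA3} already invoked in the proof of Corollary \ref{cor_main_complete}; everything else is bookkeeping, and I do not foresee any obstacle.
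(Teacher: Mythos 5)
Your proof is correct and follows essentially the same route as the paper: the five equivalences $(i)$--$(v)$ come from Corollary \ref{cor_main_complete} (where you rightly make explicit the reduction from ``semilocally'' to the plain notions over a semilocal base, which the paper leaves implicit), $(vi) \Leftrightarrow (vii)$ is Theorem \ref{thm_torus}, and the loop is closed by the implications $(iv) \Rightarrow (vi) \Rightarrow (v)$ coming from the inclusions $\rad(\gG) \subset \gT \subset \gG$, exactly as in the paper's one-line ``obvious'' step.
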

 
\begin{proof}
From Corollary \ref{cor_main_complete}, we have the equivalences
 $(i) \Longleftrightarrow (ii) \Longleftrightarrow (iii)  \Longleftrightarrow (iv)
  \Longleftrightarrow (v)$. On the other hand,
  the equivalence  $(vi)  \Longleftrightarrow (vii)$ holds
  according to Theorem \ref{thm_torus}. Now we observe that 
  the implications $(iv) \Longrightarrow (vi)$ and
  $(vi) \Longrightarrow (v)$ are obvious so the proof is complete.
\end{proof}

\section{Equivariant resolution property}

\begin{sdefinition} Let $\gG$ be a flat  group scheme over $S$ acting on a locally noetherian $S$--scheme
$\gX$. One says that $(\gG, S, \gX)$ has the
resolution property ( $(RE)$ for short)  if for every coherent $\gG$-module $\cF$ on $\gX$, there
is a locally free coherent $\gG$-module ( i.e. a $\gG$-vector bundle $\cE$) and a
$\gG$-equivariant epimorphism $\cE \to \cF \to 0$.
\end{sdefinition}

We strengthen Thomason's results.

\begin{stheorem} \label{thm_re} Let $\gG$ be a reductive $S$--group scheme.
We assume that $S$ is separated noetherian and that
$(1,S,S)$ satisfies the resolution property, e.g. $S$ is affine or regular or
admits an ample family of line bundles.
Then the following are equivalent:

\smallskip

(i) $\gG$ is linear;

\smallskip

(ii) $\rad(\gG)$ is isotrivial;

\smallskip

(iii) $\gG$ satisfies $(RE)$.

\end{stheorem}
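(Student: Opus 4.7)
The equivalence $(i) \Leftrightarrow (ii)$ is a direct consequence of Theorem \ref{thm_main_complete}, combined with Theorem \ref{thm_torus} (linearity of $\rad(\gG)$ is equivalent to its being isotrivial). Consequently, the work lies in establishing $(i) \Leftrightarrow (iii)$.

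For $(i) \Rightarrow (iii)$ my plan is to invoke Thomason's equivariant resolution machinery. By linearity, $\gG$ embeds as a closed $S$-subgroup scheme of some $\GL(\cE)$; the techniques from \cite[\S 3]{T} then show that, provided $(1,S,S)$ has the resolution property and $S$ is separated noetherian, every $\gG$-coherent sheaf on $S$ admits a resolution by $\gG$-equivariant vector bundles. This is a direct repackaging of the argument behind \cite[Cor.\ 3.2]{T} already cited in Corollary \ref{cor_main2}, now recorded as a clean equivalence.

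For $(iii) \Rightarrow (i)$, which is the genuinely new direction, I would construct a faithful linear representation from the resolution property as follows. Since $\gG$ is reductive, the structural morphism $\pi:\gG\to S$ is affine of finite presentation, so $\cA := \pi_*\cO_\gG$ is a quasi-coherent $\cO_S$-algebra of finite type carrying a right $\gG$-coaction given by left translation. The first step is to produce a coherent $\gG$-stable $\cO_S$-submodule $\cF \subseteq \cA$ which generates $\cA$ as an $\cO_S$-algebra; this uses the relative version of the classical local finiteness of comodules, namely that every finitely generated $\cO_S$-submodule of $\cA$ is contained in a finitely generated $\gG$-stable one. The resulting action of $\gG$ on $\cF$ is then faithful, since any $g \in \gG(T)$ acting trivially on $\cF_T$ also acts trivially on the $\cO_T$-subalgebra it generates, namely $\cA_T$, forcing $g=1$. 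Finally I apply $(RE)$ to the coherent $\gG$-module $\cF$ to obtain a $\gG$-equivariant epimorphism $\cE \twoheadrightarrow \cF$ from a $\gG$-vector bundle $\cE$; the induced homomorphism $\gG \to \GL(\cE)$ lifts the faithful action on the quotient, is therefore itself a monomorphism, and exhibits $\gG$ as linear.

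The main obstacle I anticipate is producing the coherent $\gG$-stable generating submodule of $\pi_*\cO_\gG$ in the relative setting: over a field this is routine Hopf-algebra yoga, but over a Noetherian base one must argue more carefully, either stalk-wise or by using the explicit form of the coaction as in \cite[VI$_B$.11.8]{SGA3}. Once that local finiteness is in hand, the passage from $\cF$ to the faithful $\gG$-vector bundle $\cE$ via $(RE)$ is immediate, and the opposite direction $(i) \Rightarrow (iii)$ follows by quoting Thomason.
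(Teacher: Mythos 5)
Your plan is correct and follows essentially the same route as the paper, which disposes of $(i)\Leftrightarrow(ii)$ by Theorem \ref{thm_main_complete} and then simply cites Thomason for both remaining implications: \cite[Theorem 2.18]{T} for $(ii)\Rightarrow(iii)$ (this, rather than Cor.\ 3.2, is the reference you want) and \cite[Theorem 3.1]{T} (or \cite[VI$_B$.13.5]{SGA3}) for $(iii)\Rightarrow(i)$. Your unpacking of $(iii)\Rightarrow(i)$ --- local finiteness of the coaction on $\pi_*\cO_\gG$ over a noetherian base, faithfulness of the translation action on a generating coherent stable submodule, and passage to the covering $\gG$-vector bundle supplied by $(RE)$ --- is exactly the argument behind the cited results, so no gap.
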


\begin{proof}
$(i) \Longleftrightarrow (ii)$. This is a special case of Theorem \ref{thm_main_complete}.

\smallskip

\noindent $(ii) \Longrightarrow (iii)$. This is Thomason's result \cite[Theorem 2.18]{T}.

\smallskip

\noindent $(iii) \Longrightarrow (i)$.  This is Thomason's result \cite[Theorem 3.1]{T}, see also \cite[VI$_B$.13.5]{SGA3}.
\end{proof}

\begin{sremark}{\rm Example \ref{ex_groth}.(b) is  an example of 
a local noetherian ring $R$ and of a rank two non-isotrivial torus $\gT$.
Theorem \ref{thm_re} shows that $\gT$ does not satisfy (RE).
This answers a question of Thomason \cite[\S 2.3]{T}.
 }
\end{sremark}

\smallskip

\bigskip

\medskip

\end{document}